\newtheorem{theorem}{Theorem}[section]
\newtheorem{lemma}[theorem]{Lemma}
\theoremstyle{definition}
\newtheorem{example}[theorem]{Example}
\newtheorem{proposition}[theorem]{Proposition}
\newtheorem{remark}[theorem]{Remark}
\newtheorem{corollary}[theorem]{Corollary}
\numberwithin{equation}{section}
\begin{document}

\title[Fixed Loci of The Anticanonical Complete Linear Systems ...]{ Fixed Loci  of the Anticanonical Complete Linear Systems of Anticanonical Rational Surfaces}
\author[\sc Cerda Rodr\'{\i}guez]{Jes\'us Adrian Cerda Rodr\'{\i}guez} \address{J. A. Cerda Rodr\'{\i}guez.   Instituto de
F\'{\i}sica y Matem\'aticas (I.F.M.) Universidad Michoacana de San
Nicol\'as de Hidalgo (U.M.S.N.H.). Edificio C-3, Ciudad
Universitaria. C. P.
58040 Morelia, Michoac\'an, M\'exico.\\
E-mail: jacerda@ifm.umich.mx}

\author[\sc Failla]{Gioia Failla}
\address{G. Failla. Department of Mathematics of the University of Messina, C.da
Sant'Agata, Salita Sperone.  98166-Messina, Italy.\\
E-mail: gfailla@dipmat.unime.it}

\author[\sc Lahyane]{Mustapha Lahyane}\address{M. Lahyane. Instituto de
F\'{\i}sica y Matem\'aticas (I.F.M.) Universidad Michoacana de San
Nicol\'as de Hidalgo (U.M.S.N.H.). Edificio C-3, Ciudad
Universitaria. C. P. 58040 Morelia,
Michoac\'an, M\'exico.\\
E-mail: lahyane@ifm.umich.mx}

\author[\sc Osuna Castro]{Osvaldo Osuna Castro}\address{O. Osuna Castro. Instituto de F\'{\i}sica y Matem\'aticas (I.F.M.) Universidad Michoacana de San
Nicol\'as de Hidalgo (U.M.S.N.H.). Edificio C-3, Ciudad
Universitaria. C. P.
58040 Morelia, Michoac\'an, M\'exico.\\
E-mail:  osvaldo@ifm.umich.mx}

\thanks{Research partially supported in 2011 by C.I.C. - U.M.S.N.H. (Mexico).}
\subjclass{ Primary 14J26; Secondary 14F05.}

\date{24 November 2011}


\keywords{Smooth rational surfaces; Anticanonical divisor;
Geometrically ruled surfaces; Hodge Index Theorem; Points in general
position; N\'eron-Severi group; Blowing-up; The Picard Number of an
algebraic surface}

\begin{abstract}
We determine the fixed locus of the anticanonical complete linear
system of a given anticanonical rational surface. The case of a
geometrically ruled rational surface is fully studied, e.g., the
monoid of numerically effective divisor classes of such surface is
explicitly determined and is minimally generated by two elements. On
the other hand, as a consequence in the particular case where $X$ is
a smooth rational surface with $K_{X}^{2}>0$, the following expected
result holds: every fixed prime divisor of the complete linear
system $|-K_X|$ is a $(-n)$-curve, for some integer $n\geq 1$.
\end{abstract}

\maketitle

\section{Introduction}

This note is mainly devoted to determine the integral curves of
the fixed locus of the complete linear system $|-K_X|$ of an
anticanonical rational surface $X$. Here $X$ is anticanonical
means that it is  smooth and such that the complete linear system
$|-K_X|$ is not empty, where $K_{X}$ denotes a canonical divisor
on $X$. Such linear system is worth studying, for example,
Hironaka considers the unique fixed irreducible component of the
anticanonical complete linear system of a very special
anticanonical  rational surface in order to give an example for
which the contraction of an integral curve of strictly negative
self-intersection on an algebraic surface is not necessarily an
algebraic one (this contraction is always an analytic surface
according to Grauert).

From Theorem $\ref{FCAD2}$ below, it appears that if the fixed
locus is not the zero divisor - such situation is the general one
- then its irreducible components are either smooth rational
curves of strictly negative self-intersection or an integral curve
of arithmetic genus equal to one which has  in almost all cases a
strictly negative self-intersection. The case where the fixed
locus is zero implies that $-K_X$ is numerically effective. The
nef-ness  condition of  $-K_X$ means that the intersection number
of $K_X$ and of any prime divisor on $X$ is less than or equal to
zero.  Thus the inequality $K_{X}^{2}\geq 0$ holds and
consequently the Picard number $\rho{(X)}$ of $X$ is less than or
equal to ten.

On the other hand, from the Riemann-Roch Theorem (see Lemma
\ref{anti} below), a smooth rational surface $Y$ having a canonical
divisor $K_{Y}$ of self-intersection greater than or equal to zero
is anticanonical. Such surfaces are studied intensively for
different reasons in \cite{MN}, \cite{JR}, \cite{UPRM},
\cite{Harb2}, \cite{L1},  \cite{L2}, \cite{L3} and \cite{LH}. The
case where the self-intersection of a canonical divisor is equal to
zero is very special and leads to very interesting geometric
phenomena, see for instance \cite{MN}, \cite{UPRM}, \cite{L2},
\cite{L3} and \cite{LH}. Finally, when the self-intersetion of 
a canonical divisor is negative, one may determine the geometry 
of some specific projective rational  surfaces, e.g. see \cite{L0}, 
\cite{L2}, \cite{L3},  \cite{FLM0}, \cite{FLM1}, \cite{FLM2}, 
\cite{GM1} and \cite{GM2}.

In the case where $K_{Y}^{2}>0$ and if the fixed locus of the
complete linear system $|-K_{Y}|$ is not equal to zero as a divisor,
we will deduce mainly from Theorem $\ref{FCAD2}$ that its prime
components are smooth rational curves of strictly negative
self-intersection (see Corollary \ref{FCAD3} below). Whereas in the
case where $K_{Y}^{2}=0$, it may happen that $|-K_{Y}|$ is equal to
a singleton, so in particular, the fixed locus is an integral curve
of arithmetic genus equal to one and of self-intersection equal to
zero.

This note is organized as follows. In section $2$, we give some
standard facts about smooth rational surfaces and fix our
notations. Section $3$ deals with the case when the Picard number
of the smooth rational surface is equal to two, i.e., the case of
geometrically ruled rational surfaces. We determine the fixed
locus of the complete linear system associated to any effective
divisor (see Proposition \ref{FCGRSED}). Also, in this case, the
monoid of numerically effective divisor classes of the
geometrically ruled rational surface is explicitly determined, it
is shown that it is minimally generated by two elements, see Lemma
\ref{Monoid}. Finally, section $4$ contains our main result (see
Theorem \ref{FCAD2} below). It is shown that if the fixed locus of
the anticanonical rational surface is not equal  to zero, then
every integral curve of the fixed locus is either a $(-n)$-curve
for some integer $n\geq 1$, or an integral curve of arithmetic
genus equals to one and of self-intersection less than or equal to
zero. Whereas if the fixed locus is zero, then the
self-intersection of the canonical divisor of the surface is
larger than or equal to zero; thus gives an explicit description
of the anticanonical rational surface.

\section{Preliminaries}

In this section, we mention the notions that we need. See \cite{HA}
as a  reference for these materials. Let $X$ be a smooth algebraic
surface defined over an algebraically closed field. A divisor on $X$
is effective if it is a nonnegative linear combination of prime
divisors. Similarly, a class of divisors modulo algebraic
equivalence on $X$ is effective if it contains an effective divisor.
Moreover, if $X$ is rational, then the class of divisors modulo
algebraic equivalence containing the divisor $D$ on $X$ is effective
if and only if the vector space of global sections of the invertible
sheaf ${\mathcal O_{X}(D)}$ associated to $D$ in the Picard group
$Pic(X)$ of $X$ is not trivial.  Indeed, more generally  the
algebraic, the linear, the numerical and the rational equivalences
of divisors on the smooth rational surface $X$ are the same.  On the
other hand $Pic(X)$ is isomorphic to the group $Cl(X)$ of classes of
divisors modulo linear equivalence on $X$.\\

Let $Y$  be  an anticanonical rational surface and let $K_{Y}$ be
a canonical divisor on it. That $Y$ is anticnonical means  by
definition that $Y$ a smooth surface such that its anticanonical
complete linear system $|-K_{Y}|$ is not empty. Following
\cite{HA}, we adopt in all this note the following notations:
\begin{itemize}

\item $Div(Y)$ is the group of divisors on $Y$.

\item $D\sim D'$ means that $D$ is linearly equivalent to $D'$,
where $D$ and $D'$ are elements of $Div(Y)$,

\item $Cl(Y)$ is the quotient group $Div(Y)/\sim $ of $Div(Y)$ by $\sim $.

\item $NS(Y)$ is the N\'eron-Severi group $NS(Y)$ of $Y$, i.e., the quotient
group of $Div(Y)$ by the numerical equivalence classes of divisors
on $Y$. Since $Y$ is a rational surface, the linear and numerical
equivalences are equivalents on $Div(Y)$. One has  $NS(Y)$ is
equal to $Cl(Y)$.

\item $\rho (Y)$ is the rank of $NS(Y)$ and called the Picard
number of $Y$.

\item $\Bbb{F}_{n}$ is the Hirzebruch
surface associated to the integer $n$, $n\geq 0$ (see
\cite[Section 2, p. 369 ]{HA}).

\item $\mathcal{F}$ is the element of $NS(\Bbb{F}_{n})$ associated
to any fiber of the ruling of $\Bbb{F}_{n}$ if $n\neq 0$, and any
fiber of any ruling of $\Bbb{F}_{0}$ if $n=0$.

\item $\mathcal{C}{_n}$ is the element of $NS(\Bbb{F}_{n})$ determined by
the unique integral curve of self-intersection equal to $-n$ if $n
\neq 0$ or any fiber $F'$ of the second ruling if $n=0$.

\item For a smooth rational surface $Y$, $\rho (Y)=1$ if and only if
$Y$ is isomorphic to the projective plane $\Bbb P ^{2}$. And $\rho
(Y)=2$ if and only if $Y$ is isomorphic to $\Bbb{F}_{n}$ for some
$n\geq 0$. This can be deduced from \cite[Chapter 5 ]{HA}).
\end{itemize}

Now we state the Riemann-Roch Theorem for smooth algebraic
surfaces, see  \cite[Theorem 1.6 (Riemann-Roch)., page 362]{HA}.

Let $X$ be a smooth algebraic surface. If $D$ is a divisor on $X$
and $\mathcal O_{X}(D)$ denotes the invertible sheaf associated to
$D$ in $Pic(X)$. Then the following equality holds.
$$h^{0}(X,{\mathcal O_{X}(D)})- h^{1}(X,{\mathcal O_{X}(D)})+
h^{0}(X,{\mathcal O_{X}(K_{X}-D})) = \chi ({\mathcal O_{X}})+
\frac{1}{2}(D^{2}-K_{X}.D),$$ where $K_{X}$ and $\chi (X)$ denote
a canonical divisor and the Euler characteristic of $X$
respectively.

Notice that for smooth rational surfaces $Z$, one always has $\chi
({\mathcal O_{Z}})=1$.

The next lemma provides, in particular,  an example of an
anticanonical rational surface. It is a straightforward application
of the Riemann-Roch Theorem  to the invertible sheaf associated to
an anticanonical divisor (see \cite[Theorem1.6 (Riemann-Roch)., page
362]{HA})  and of the rationality criterion of Castelnuevo (see
\cite[Theorem 6.1., page 422]{HA} and \cite{BPV}).

\begin{lemma}\label{anti}
Let $Y$ be a smooth rational surface such that $K_{Y}^{2}\geq 0$.
Then $Y$ is anticanonical.
\end{lemma}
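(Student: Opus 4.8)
The plan is to apply the Riemann--Roch Theorem quoted above to the anticanonical divisor $D = -K_Y$ and extract a lower bound for $h^{0}(Y,\mathcal{O}_{Y}(-K_Y))$. Putting $D = -K_Y$, one has $K_Y - D = 2K_Y$, $D^{2} = K_Y^{2}$ and $K_Y\cdot D = -K_Y^{2}$, so $D^{2} - K_Y\cdot D = 2K_Y^{2}$; together with $\chi(\mathcal{O}_{Y}) = 1$ (which holds because $Y$ is a smooth rational surface) the Riemann--Roch equality reads
\[
h^{0}(Y,\mathcal{O}_{Y}(-K_Y)) - h^{1}(Y,\mathcal{O}_{Y}(-K_Y)) + h^{0}(Y,\mathcal{O}_{Y}(2K_Y)) = 1 + K_Y^{2}.
\]

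Next I would dispose of the two correction terms on the left-hand side. The dimension $h^{1}(Y,\mathcal{O}_{Y}(-K_Y))$ is nonnegative, hence it only helps the inequality we are after. The term $h^{0}(Y,\mathcal{O}_{Y}(2K_Y))$ is the bigenus $P_{2}$ of $Y$; since the plurigenera are birational invariants and all vanish for the projective plane $\Bbb P^{2}$, rationality of $Y$ forces $h^{0}(Y,\mathcal{O}_{Y}(2K_Y)) = 0$. This is exactly where the cited rationality criterion of Castelnuovo enters the picture: a rational surface satisfies $P_{2} = q = 0$. Substituting these two facts into the displayed identity gives
\[
h^{0}(Y,\mathcal{O}_{Y}(-K_Y)) = 1 + K_Y^{2} + h^{1}(Y,\mathcal{O}_{Y}(-K_Y)) \geq 1 + K_Y^{2}.
\]
Finally, the hypothesis $K_Y^{2}\geq 0$ yields $h^{0}(Y,\mathcal{O}_{Y}(-K_Y)) \geq 1 > 0$, so the complete linear system $|-K_Y|$ is nonempty; by definition $Y$ is then an anticanonical rational surface, which is what we want.

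As for where the difficulty lies: essentially nowhere. The computation is mechanical once Riemann--Roch is available, and the single nontrivial input --- the vanishing $h^{0}(Y,\mathcal{O}_{Y}(2K_Y)) = 0$ --- is a standard feature of rational surfaces that one may either quote or derive from birational invariance of the bigenus. The one subtlety worth flagging is that rationality (not merely $K_Y^{2}\geq 0$) is genuinely used: for a surface of general type $2K_Y$ can be effective and the argument collapses. So I expect the proof to be short, the only real choice being how much of the birational-invariance machinery to cite versus prove.
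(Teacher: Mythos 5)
Your proof is correct and follows exactly the route the paper indicates: apply Riemann--Roch to $-K_Y$, use $\chi(\mathcal{O}_Y)=1$ and the vanishing $h^{0}(Y,\mathcal{O}_{Y}(2K_Y))=0$ coming from rationality, and conclude $h^{0}(Y,\mathcal{O}_{Y}(-K_Y))\geq 1+K_Y^{2}\geq 1$. The paper gives no more detail than this sketch, so your write-up is if anything more complete than the original.
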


Here we recall the notion of  nefness of divisors on a smooth
algebraic surface $X$.

Let $D$ be a divisor on a smooth algebraic surface $X$. $D$ is
numerically effective (nef in short) if the intersection number of
$D$ with any prime divisor on $X$ is larger than or equal to zero.
Similarly, a class of divisors modulo algebraic equivalence on $X$
is nef if this class contains a nef divisor.

To illustrate the last definition, the following examples are
useful:
\begin{example}\label{nef1}
Let $ \pi : X \longrightarrow \Bbb P^{2}$ be the blow up the
projective plane $\Bbb P^{2}$ at a finite set of points. Then, the
class of a line pulled back to $X$ via $\pi $ is  nef. However,
the exceptional divisors  are not nef.
\end{example}

\begin{example}\label{nef2}
Let $\Bbb{F}_{n}$ be the Hirzebruch surface associated to the
integer $n\geq 0$. Then   $\mathcal{F}$ and $(\mathcal{C}{_n} + n
\mathcal{F})$ are numerically effective.
\end{example}

The following example generalizes the useful remark stated in
\cite[Remarque utile III.5, p.35]{B}.
\begin{example}\label{nef3}
Let $Z$ be a smooth algebraic surface. Let $\Gamma _{1}, \ldots ,
\Gamma _{p}$ be the irreducible components of the effective divisor
$D$ on $Z$. Then the followings are equivalents:
\begin{enumerate}
\item The intersection number of $D$ and $\Gamma _{i}$ is larger
than or equal to zero for every $i=1, \ldots , p$.

\item  $D$ is nef.

\end{enumerate}
\end{example}

 We are interested to answer the following question: let $Y$ be
an anticanonical rational surface and let $K_{Y}$ be a canonical
divisor on $Y$. What kind of fixed integral curves may have the
anticanonical complete linear system $|-K_{Y}|$ if it has some?
More specially, we are interested in the curves which are fixed
components in $|-K_{Y}|$.\\

Since the anticanonical complete linear system
$|{\mathcal{O}}_{\Bbb{P}^{2}}(3)|$ of the projective plane
$\Bbb{P}^{2}$ does not have a fixed component, we will focus in
the case $\rho (Y)\geq 2$.  Firstly in the next section, we will
review the case of geometrically ruled rational surfaces, i.e.,
those smooth rational surfaces with Picard number equal to two.\\

Here we give a useful result.

\begin{lemma}
\label{move} Let $\Gamma $ be a prime divisor on an anticanonical
rational surface $Z$. If $\Gamma ^{2}>0$, then $h^{0}(Z,
O_{Z}(\Gamma ))\geq 2$.
\end{lemma}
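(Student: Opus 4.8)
The plan is to apply the Riemann--Roch Theorem to $\mathcal{O}_Z(\Gamma)$ and show the right-hand side forces $h^0 \geq 2$, then control the correction terms using that $Z$ is anticanonical and $\Gamma$ is an integral curve of positive self-intersection. Write down Riemann--Roch:
\[
h^0(Z,\mathcal{O}_Z(\Gamma)) - h^1(Z,\mathcal{O}_Z(\Gamma)) + h^0(Z,\mathcal{O}_Z(K_Z-\Gamma)) = 1 + \tfrac12(\Gamma^2 - K_Z\cdot\Gamma).
\]
Since $h^1 \geq 0$, it suffices to bound the two other correction terms. First I would argue $h^0(Z,\mathcal{O}_Z(K_Z-\Gamma)) = 0$: indeed, because $Z$ is anticanonical, $|-K_Z| \neq \emptyset$, so if $K_Z - \Gamma$ were effective then $-\Gamma = (K_Z - \Gamma) + (-K_Z)$ would be effective, which is impossible for a prime divisor $\Gamma$ (a nonzero effective divisor cannot have an effective negative). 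Hence that term vanishes.

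Next I would handle $-\tfrac12 K_Z\cdot\Gamma$. By adjunction, $\Gamma^2 + K_Z\cdot\Gamma = 2p_a(\Gamma) - 2 \geq -2$, so $-K_Z\cdot\Gamma = \Gamma^2 - 2p_a(\Gamma) + 2 \geq \Gamma^2 - 2p_a(\Gamma)$; more usefully, $-K_Z\cdot\Gamma \geq \Gamma^2 + 2 - 2p_a(\Gamma)$. Plugging into Riemann--Roch gives
\[
h^0(Z,\mathcal{O}_Z(\Gamma)) \geq 1 + \tfrac12\big(\Gamma^2 - K_Z\cdot\Gamma\big) \geq 1 + \tfrac12\big(2\Gamma^2 + 2 - 2p_a(\Gamma)\big) = 2 + \Gamma^2 - p_a(\Gamma).
\]
This already gives $h^0 \geq 2$ whenever $\Gamma^2 \geq p_a(\Gamma)$. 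To finish I need the bound $\Gamma^2 \geq p_a(\Gamma)$ under the hypothesis $\Gamma^2 > 0$; a cleaner route avoiding $p_a$ is to instead bound $-K_Z\cdot\Gamma$ directly: since $-K_Z$ is effective and $\Gamma$ is prime with $\Gamma^2 > 0$, one can write $-K_Z \sim \Gamma' + \sum a_i E_i$ and analyze $\Gamma\cdot(-K_Z)$, but the genuinely robust argument is simply $-K_Z\cdot\Gamma \geq -2$ from adjunction ($p_a(\Gamma)\geq 0$ gives $K_Z\cdot\Gamma \leq \Gamma^2 - 2$, hence $-K_Z\cdot\Gamma \geq 2 - \Gamma^2$), which yields $h^0 \geq 1 + \tfrac12(\Gamma^2 + 2 - \Gamma^2) = 2$ — wait, that substitution is too weak since it drops the $\Gamma^2$ term; so I must keep both: from $K_Z\cdot\Gamma \leq \Gamma^2 - 2$ I get $\Gamma^2 - K_Z\cdot\Gamma \geq \Gamma^2 - (\Gamma^2 - 2) = 2$, hence $h^0 \geq 1 + 1 = 2$.

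So the key steps, in order, are: (i) write Riemann--Roch for $\mathcal{O}_Z(\Gamma)$; (ii) vanish $h^0(K_Z - \Gamma)$ using that $-K_Z$ is effective and $\Gamma$ is an effective prime divisor; (iii) use $h^1 \geq 0$; (iv) use adjunction $p_a(\Gamma) = 1 + \tfrac12(\Gamma^2 + K_Z\cdot\Gamma) \geq 0$ to get $\Gamma^2 - K_Z\cdot\Gamma \geq 2$; (v) conclude $h^0(Z,\mathcal{O}_Z(\Gamma)) \geq 2$. The main obstacle is step (ii): one must be careful that $\Gamma$ being a prime divisor genuinely precludes $K_Z - \Gamma$ being effective, which relies essentially on $|-K_Z|\neq\emptyset$ (the anticanonical hypothesis) — this is exactly where the hypothesis that $Z$ is anticanonical, rather than an arbitrary rational surface, is used. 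Note the hypothesis $\Gamma^2 > 0$ is used only mildly here (it guarantees $\Gamma^2 \geq 1$, strengthening the bound, and is the natural setting), but the core inequality already follows; in fact the argument shows $h^0 \geq 2$ as soon as $\Gamma^2 \geq 0$ and $\Gamma$ is prime with $p_a(\Gamma)\geq 0$, though we only claim it for $\Gamma^2 > 0$.
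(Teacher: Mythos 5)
Your overall framework (Riemann--Roch, kill the $h^0(K_Z-\Gamma)$ term, bound $\Gamma^2-K_Z\cdot\Gamma$ from below by $2$) matches the paper's, and your step (ii) is fine. But the step where you actually produce the needed inequality is wrong: you claim that $p_a(\Gamma)\geq 0$ gives $K_Z\cdot\Gamma\leq \Gamma^2-2$. Adjunction says $K_Z\cdot\Gamma=2p_a(\Gamma)-2-\Gamma^2$, so $p_a(\Gamma)\geq 0$ yields the \emph{lower} bound $K_Z\cdot\Gamma\geq -\Gamma^2-2$; it gives no upper bound at all. The inequality you want, $K_Z\cdot\Gamma\leq\Gamma^2-2$, is equivalent to $p_a(\Gamma)\leq\Gamma^2$, which is exactly the statement you flagged as needing proof in your first computation and then never actually established — the adjunction manipulation is circular. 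The same error propagates to your closing remark that the argument works whenever $\Gamma^2\geq 0$: the paper's Remark 2.6 explicitly notes the lemma can fail for $\Gamma^2=0$ (e.g.\ an anticanonical curve of arithmetic genus one with $h^0=1$), so any ``proof'' that applies there must be broken.

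The missing ingredient is an upper bound on $K_Z\cdot\Gamma$, and the only available source for it is the combination of the two hypotheses you underused: since $\Gamma$ is prime with $\Gamma^2>0$, it meets every prime divisor (including itself) nonnegatively, hence is nef; since $Z$ is anticanonical, $-K_Z$ is effective; therefore $\Gamma\cdot(-K_Z)\geq 0$, i.e.\ $K_Z\cdot\Gamma\leq 0$. This is the route the paper takes (and the one you started down with the decomposition $-K_Z\sim\Gamma'+\sum a_iE_i$ before abandoning it). One then still needs a small case analysis: if $K_Z\cdot\Gamma\leq -1$, then $\Gamma^2-K_Z\cdot\Gamma\geq 1+1=2$ and you are done; if $K_Z\cdot\Gamma=0$, then adjunction forces $\Gamma^2=2p_a(\Gamma)-2$ to be even, so $\Gamma^2\geq 2$ and again $\Gamma^2-K_Z\cdot\Gamma\geq 2$. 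With that replacement for your step (iv), the proof closes.
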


\begin{proof}
The Riemann-Roch Theorem applied to the invertible sheaf
${\mathcal O}_{Z}(\Gamma )$ gives the following inequality:
$$h^{0}(Z,{\mathcal O}_{Z}(\Gamma ))\geq 1+ \frac{1}{2}(\Gamma
^{2}-K_{Z}.\Gamma ).$$ An application of Example \ref{nef3} to
$\Gamma $ shows that $\Gamma $ is nef. Taking into account that
$Z$ is anticanonical leads to the inequality: $\Gamma . K_{Z}\leq
0$. Then the  result follows obviously if $\Gamma . K_{Z}\leq -1$.
Whereas if $\Gamma . K_{Z}=0$, then the adjunction formula implies
that $\Gamma ^{2}\geq 2$. And we are done.
\end{proof}

\begin{remark}  If one allows that
$\Gamma ^{2}=0$  in the above Lemma \ref{move}, then the
inequality $h^{0}(Z, O_{Z}(\Gamma ))\geq 2$ may fail to hold.
\end{remark}

\section{The Case of a Geometrically Ruled Rational Surface}

Let $\Bbb{F}_{n}$ be the Hirzebruch surface associated to the
integer $n\in \Bbb N$. The N\'eron-Severi group $NS(\Bbb{F}_{n})$
of $\Bbb{F}_{n}$ is a free abelian group generated by
$\mathcal{C}{_n}$ and $\mathcal{F}$ and it is endowed with the
intersection form denoted by . which is given  on the generators
by (see \cite[proposition 3.2., p. 386]{HA}):

\begin{itemize}

\item [$\bullet \;\;\;$] $\mathcal{C}{_n}^{2}=-n$;
\item [$\bullet \;\;\;$] ${\mathcal F}^{2}=0$;
\item [$\bullet \;\;\;$]  $\mathcal{C}{_n}.\mathcal F=1$.
\end{itemize}

The following lemma shows that $\mathcal{C}{_n}$ and $\mathcal{F}$
generate also the monoid $M(\Bbb{F}_{n})$ of effective divisor
classes of $\Bbb{F}_{n}$ and that the monoid $NEF(\Bbb{F}_{n})$ of
numerically effective divisors classes of $\Bbb{F}_{n}$ is
generated by  two elements, namely $(\mathcal{C}{_n} + n
\mathcal{F})$ and $\mathcal{F}$. Note that both $\mathcal{C}{_n}$,
$(\mathcal{C}{_n} + n \mathcal{F})$  and $\mathcal{F}$ are all of
them prime  classes, i.e., each of them is the class in
$NS(\Bbb{F}_{n})$ of a prime divisor on $\Bbb{F}_{n}$. For
completeness, we give a proof of it.

\begin{lemma}
\label{Monoid} Let  $NS(\Bbb{F}_{n})$ be as above. Then
\begin{itemize}
\item[1.] $M(\Bbb{F}_{n})= \Bbb N \mathcal{C}{_n} + \Bbb N
\mathcal{F}$. Moreover, $M(\Bbb{F}_{n})$ can not be generated by
one element.
\item[2.] $NEF(\Bbb{F}_{n})= \Bbb N (\mathcal{C}{_n} + n \mathcal{F}) +
\Bbb N \mathcal{F}$. Moreover, $NEF(\Bbb{F}_{n})$ can not be
generated by one element.
\end{itemize}
\end{lemma}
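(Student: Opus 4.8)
The plan is to prove each of the two statements by first establishing the set-theoretic description of the monoid, and then ruling out the possibility of a single generator by a rank/geometry argument.

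For part (1), I would argue as follows. The inclusion $\mathbb{N}\mathcal{C}_n + \mathbb{N}\mathcal{F} \subseteq M(\mathbb{F}_n)$ is immediate, since $\mathcal{C}_n$ is represented by the negative section and $\mathcal{F}$ by a fiber, both effective, and sums of effective classes are effective. For the reverse inclusion, take a divisor $D$ with $D \sim a\mathcal{C}_n + b\mathcal{F}$ effective, $a,b \in \mathbb{Z}$; I want $a \geq 0$ and $b \geq 0$. Intersecting with the nef class $\mathcal{F}$ (nef by Example \ref{nef2}) gives $D.\mathcal{F} = a \geq 0$. To get $b \geq 0$, I would decompose $D$ into its irreducible components. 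If $a = 0$, then $D.\mathcal{F}=0$ forces every component of $D$ to be a fiber (a component not contained in a fiber meets $\mathcal{F}$ positively), so $D = b\mathcal{F}$ with $b \geq 0$. If $a > 0$, I intersect with the nef class $\mathcal{C}_n + n\mathcal{F}$ (again Example \ref{nef2}): $(a\mathcal{C}_n+b\mathcal{F}).(\mathcal{C}_n+n\mathcal{F}) = -an + an + b = b \geq 0$. Either way $b\geq 0$, giving $M(\mathbb{F}_n) = \mathbb{N}\mathcal{C}_n + \mathbb{N}\mathcal{F}$. That $M(\mathbb{F}_n)$ is not generated by a single element: a monoid generated by one element $v$ lies on the ray $\mathbb{N}v$, hence has rank $1$, but $M(\mathbb{F}_n)$ contains the two linearly independent classes $\mathcal{C}_n$ and $\mathcal{F}$, so it spans a rank-$2$ subgroup of $NS(\mathbb{F}_n)$ and cannot be cyclic.

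For part (2), the inclusion $\mathbb{N}(\mathcal{C}_n+n\mathcal{F}) + \mathbb{N}\mathcal{F} \subseteq NEF(\mathbb{F}_n)$ follows from Example \ref{nef2} together with the fact that a sum of nef classes is nef. For the reverse inclusion, let $D \sim a\mathcal{C}_n + b\mathcal{F}$ be nef. Since $D$ is nef and $\mathcal{F}$, $\mathcal{C}_n$ are effective, $D.\mathcal{F} = a \geq 0$ and $D.\mathcal{C}_n = -an + b \geq 0$, i.e. $b \geq an$. Writing $D = a(\mathcal{C}_n + n\mathcal{F}) + (b - an)\mathcal{F}$ exhibits $D$ as a nonnegative combination of the two generators, since $a \geq 0$ and $b - an \geq 0$. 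Hence $NEF(\mathbb{F}_n) = \mathbb{N}(\mathcal{C}_n+n\mathcal{F}) + \mathbb{N}\mathcal{F}$. The non-cyclicity is again a rank argument: $\mathcal{F}$ and $\mathcal{C}_n + n\mathcal{F}$ are linearly independent in $NS(\mathbb{F}_n)$, so $NEF(\mathbb{F}_n)$ spans a rank-$2$ group and cannot be generated by one element.

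I do not anticipate a serious obstacle; the content is entirely a two-dimensional lattice computation. The one point needing mild care is the reverse inclusion in part (1) when $a > 0$: one must make sure the two nef classes $\mathcal{F}$ and $\mathcal{C}_n + n\mathcal{F}$ together ``see'' enough of the effective cone, which is exactly why the case split on whether $a=0$ is convenient (when $a=0$ the class $\mathcal{C}_n + n\mathcal{F}$ still works directly, so the split is really only for streamlining). A secondary point is to state clearly, when asserting non-cyclicity, why rank-$1$ monoids cannot contain two independent elements — this is just the observation that the subgroup of $NS(\mathbb{F}_n)$ generated by a cyclic monoid is itself cyclic, hence of rank at most one.
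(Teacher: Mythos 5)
Your proof is correct and follows essentially the same route as the paper: both inclusions are obtained by intersecting with the nef classes $\mathcal{F}$ and $\mathcal{C}_n+n\mathcal{F}$ (for $M(\Bbb F_n)$) and with the prime classes $\mathcal{F}$ and $\mathcal{C}_n$ (for $NEF(\Bbb F_n)$), and non-cyclicity follows from linear independence. The only difference is your case split on $a=0$ versus $a>0$ in part (1), which, as you note yourself, is unnecessary since $D.(\mathcal{C}_n+n\mathcal{F})=b\ge 0$ holds uniformly.
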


\begin{proof}

\begin{itemize}
\item[1.] The inclusion $\Bbb N \mathcal{C}{_n} + \Bbb N
\mathcal{F}\subset M(\Bbb{F}_{n})$ is clear. Let us see why the
other inclusion is true. Take an element $z$ in $M(\Bbb{F}_{n})
\subset NS(\Bbb{F}_{n})$, it follows that $z= u\mathcal{C}{_n} +
v\mathcal{F}$ for some integers $u$ and $v$. The fact that
$\mathcal{F}$ and $(\mathcal{C}{_n}+n\mathcal{F})$ (see Example
$\ref{nef2}$) are numerically effective gives the required
inequalities $u=z. \mathcal{F}\geq 0$ and $v=
z.(\mathcal{C}{_n}+n\mathcal{F})\geq 0$. This proves the first
statement. Since $\mathcal{C}{_n}$ and $\mathcal{F}$ are linearly
independents, the submonoid $M(\Bbb{F}_{n})$ of $NS(\Bbb{F}_{n})$
can not be generated by one element.

\item[2.]
It is obvious that $\Bbb N (\mathcal{C}{_n} + n \mathcal{F}) + \Bbb
N \mathcal{F} \subset NEF(\Bbb{F}_{n}) $. Now, let $x$ be an element
of $NEF(\Bbb{F}_{n}) \subset  NS(\Bbb{F}_{n})$, there exist then two
integers $a$ and $b$ such that $x= a\mathcal{C}{_n} + b\mathcal{F}$.
Since $\mathcal{F}$ and $\mathcal{C}{_n}$ are effective and $x$ is
numerically effective, we get $0\leq \mathcal{F}. x=a$ and $0\leq
x.\mathcal{C}{_n}=b-na$. So, $x=a\mathcal{C}{_n} + b\mathcal{F}=
a(\mathcal{C}{_n}+n\mathcal{F})+ (b-na)\mathcal{F}$ and we are done.
Again as above,$NEF(\Bbb{F}_{n})$ can not be generated by one
element.
\end{itemize}
\end{proof}

Next, we determine the fixed locus of any complete linear system
$|aC_n+bF|$ associated to an effective divisor $D_{(a,b)}$ whose
class in the N\'eron-Severi group $NS(\Bbb{F}_{n})$  is
$a\mathcal{C}_{n} + b\mathcal{F}$. Our result is:

\begin{proposition}
\label{FCGRSED} Let $a\mathcal{C}_{n} + b\mathcal{F}$ be an
effective element of $NS(\Bbb{F}_{n})$, where $n$ is an integer
greater than or equal to zero. Then, the complete linear system
$|aC_n+bF|$ does not have a fixed component if both inequalities
$b\geq an$ and $n\geq 1$ hold. Moreover if $b<an$, then  there is
only one fixed component. In this case the fixed component and the
mobile component of $|aC_n+bF|$ are $jC_{n}$ and $(a-j)C_{n}+bF$
respectively, where $j$ is  the unique integer $j$ such that
$1\leq j\leq a$ and $(a-j)n\leq b\leq (a-j+1)n-1$. For $n=0$,
$a\mathcal{C}_{0} + b\mathcal{F}$ does not have a fixed component.
\end{proposition}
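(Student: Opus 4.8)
The plan is to work entirely inside the free abelian group $NS(\Bbb{F}_n)$ with basis $\mathcal{C}_n,\mathcal{F}$ and intersection form given on the generators, and to use repeatedly that $\mathcal{F}$ is nef with $\mathcal{F}^2=0$, that $\mathcal{C}_n+n\mathcal{F}$ is nef, and that $\mathcal{C}_n$ is the class of the unique irreducible curve of self-intersection $-n$. First I would dispose of the case $n=0$: here $\Bbb{F}_0\cong\Bbb{P}^1\times\Bbb{P}^1$ and $a\mathcal{C}_0+b\mathcal{F}$ effective forces $a,b\geq 0$ by Lemma~\ref{Monoid}, while both classes are nef (Example~\ref{nef2} with $n=0$, so $\mathcal{C}_0=\mathcal{F}'$); since the two rulings move freely and meet every irreducible curve in the appropriate degree, $|a\mathcal{C}_0+b\mathcal{F}|$ is base-point free, hence has no fixed component. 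For $n\geq 1$, the key structural fact is that $\mathcal{C}_n$ is the \emph{only} irreducible curve on $\Bbb{F}_n$ with negative self-intersection: any irreducible curve $\Gamma\not\sim\mathcal{C}_n$ satisfies $\Gamma^2\geq 0$, because writing $\Gamma\sim u\mathcal{C}_n+v\mathcal{F}$ one has $u=\Gamma\cdot\mathcal{F}\geq 0$ and $v-un=\Gamma\cdot\mathcal{C}_n\geq 0$ (as $\Gamma\neq\mathcal{C}_n$), whence $\Gamma$ is nef. So the only possible fixed component of any linear system on $\Bbb{F}_n$ is $\mathcal{C}_n$ itself.

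Next I would prove the case $b\geq an$, $n\geq 1$: here $a\mathcal{C}_n+b\mathcal{F}=a(\mathcal{C}_n+n\mathcal{F})+(b-an)\mathcal{F}$ is a nonnegative combination of the two nef generators, so it is nef; in particular $(a\mathcal{C}_n+b\mathcal{F})\cdot\mathcal{C}_n=b-an\geq 0$. If $\mathcal{C}_n$ were a fixed component, then $a\mathcal{C}_n+b\mathcal{F}-\mathcal{C}_n=(a-1)\mathcal{C}_n+b\mathcal{F}$ would be effective; but effectiveness plus Lemma~\ref{Monoid} (or directly pairing against $\mathcal{F}$, $\mathcal{C}_n+n\mathcal{F}$) would be consistent, so instead I would argue by induction on $a$: the base case $a=0$ gives $b\mathcal{F}$, which is base-point free (pull-back of $\mathcal{O}_{\Bbb{P}^1}(b)$ under the ruling), hence no fixed component; for the inductive step, since $b\geq an>an-n=(a-1)n$, any fixed component must still be $\mathcal{C}_n$, and one shows $h^0(\Bbb{F}_n,\mathcal{C}_n)=1$ while $h^0$ of $a\mathcal{C}_n+b\mathcal{F}$ strictly exceeds $h^0$ of $(a-1)\mathcal{C}_n+b\mathcal{F}$ (computable via Riemann–Roch, using $\chi(\mathcal{O})=1$ and the vanishing of the higher cohomology for nef classes on $\Bbb{F}_n$), so $\mathcal{C}_n$ cannot be forced into every member. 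Concretely, $h^0(\Bbb{F}_n,\,a\mathcal{C}_n+b\mathcal{F})=\sum_{i=0}^{a}(b-in+1)$ when $b\geq an$, and the fact that this counts sections with no common factor $\mathcal{C}_n$ follows because the summand $i=0$ already contributes the freely-moving pencil $b\mathcal{F}$.

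Finally, for $b<an$ (still $n\geq 1$), I would extract the fixed part by the "peeling" argument. Let $j\geq 1$ be the unique integer with $(a-j)n\leq b\leq (a-j+1)n-1$, which exists and is unique precisely because $b<an$ (so $j\geq 1$) while $b\geq 0$ forces $j\leq a$; equivalently $j=a-\lfloor b/n\rfloor$. Write $a\mathcal{C}_n+b\mathcal{F}=j\mathcal{C}_n+\big((a-j)\mathcal{C}_n+b\mathcal{F}\big)$. Since $(a-j)n\leq b$, the class $(a-j)\mathcal{C}_n+b\mathcal{F}$ falls into the already-handled nef case, so its linear system has no fixed component, and it remains to check that $(a-j)\mathcal{C}_n+b\mathcal{F}$ pairs nonnegatively with $\mathcal{C}_n$ — indeed $((a-j)\mathcal{C}_n+b\mathcal{F})\cdot\mathcal{C}_n=b-(a-j)n\geq 0$ — and that it is genuinely the moving part: any further copy of $\mathcal{C}_n$ could be split off only if $(a-j-1)\mathcal{C}_n+b\mathcal{F}$ were still effective, which holds iff $b\geq 0$, but more to the point the residual system is base-point free so no additional $\mathcal{C}_n$ is forced. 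Conversely, $j\mathcal{C}_n$ really is in the base locus: $(a\mathcal{C}_n+b\mathcal{F})\cdot\mathcal{C}_n=b-an<0$, so $\mathcal{C}_n$ is a fixed component; iterating, $(a\mathcal{C}_n+b\mathcal{F}-k\mathcal{C}_n)\cdot\mathcal{C}_n=b-(a-k)n<0$ for all $k<j$, forcing at least $j$ copies of $\mathcal{C}_n$ into every member, while at $k=j$ the pairing turns nonnegative and the peeling stops. The main obstacle, and the point deserving the most care, is the bookkeeping that shows the residual class $(a-j)\mathcal{C}_n+b\mathcal{F}$ is not merely nef but actually base-point free with exactly the claimed $h^0$, so that "$j\mathcal{C}_n$" is the \emph{entire} fixed component and not a proper sub-divisor of it; this is where one must invoke that on $\Bbb{F}_n$ a nef class with nonnegative intersection against $\mathcal{C}_n$ is globally generated (which follows from the explicit description of $\Bbb{F}_n$ as a $\Bbb{P}^1$-bundle and the computation of $h^0$ via Riemann–Roch together with Kodaira-type vanishing of $h^1$ for nef classes).
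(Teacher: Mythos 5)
Your treatment of the case $b<an$ is essentially the paper's own argument, carried out in more detail: the paper likewise peels off copies of $\mathcal{C}_n$ from the negative intersection number $(a\mathcal{C}_{n}+b\mathcal{F})\cdot\mathcal{C}_{n}=b-an<0$ and then concludes that $j\mathcal{C}_n$ is the whole fixed part because the residual system $|(a-j)C_n+bF|$ has no fixed component; your explicit iteration showing $(D-k\mathcal{C}_n)\cdot\mathcal{C}_n<0$ for all $k<j$ is a useful expansion of the paper's one line. Where you genuinely diverge is in the fixed-component-free cases ($b\geq an$ with $n\geq 1$, and $n=0$): the paper simply cites \cite[Corollary 2.18, p. 380]{HA}, whereas you give a self-contained cohomological argument via the formula $h^0(a\mathcal{C}_n+b\mathcal{F})=\sum_{i=0}^{a}(b-in+1)$ and the strict inequality $h^0(a\mathcal{C}_n+b\mathcal{F})>h^0((a-1)\mathcal{C}_n+b\mathcal{F})$. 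This buys independence from Hartshorne's corollary, at the price of having to justify the $h^{0}$ formula (equivalently the $h^1$-vanishing you invoke), which you assert rather than prove; that is standard for $\Bbb{P}^1$-bundles but should be referenced or derived.

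One step does need repair. You assert that since every irreducible curve $\Gamma$ with $\Gamma\not\sim\mathcal{C}_n$ is nef, ``the only possible fixed component of any linear system on $\Bbb{F}_{n}$ is $\mathcal{C}_n$.'' Nefness is not by itself the reason: a nef prime divisor can be a fixed component (an integral anticanonical curve of self-intersection zero on a suitable rational surface is the fixed part of $|-K|$ and is nef). The correct intermediate statement is that a prime divisor which \emph{moves} in its own complete linear system can never be a fixed component of any system (otherwise one shows inductively that $k\Gamma$ lies in the fixed part for every $k$, which is absurd), and one must then check that every irreducible $\Gamma\not\sim\mathcal{C}_n$ does move: when $\Gamma^{2}>0$ this is exactly Lemma \ref{move}, and in the remaining case $\Gamma^{2}=0$ your own computation forces $u=0$ for $n\geq 1$, so $\Gamma$ is a fiber and moves in the pencil $|\mathcal{F}|$. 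With that sentence supplied, your argument is complete and correct.
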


\begin{proof} Assuming that $b\geq an$ and  $n\geq 1$, it follows
from \cite[Corollary 2.18., page 380]{HA}  that the complete
linear system $|aC_n+bF|$ does not have a fixed component. Now if
$b<an$, then from $jC_n.(aC_n+bF)=j(b-an)<0$ we deduce that $jC_n$
is a fixed component of $|aC_n+bF|$, even it is the fixed
component since $|(a-j)C_n+bF|$ contains an integral curve. To end
the proof, it is straightforward from \cite[Corollary 2.18., page
380]{HA} that if $n=0$, then the effective class $a\mathcal{C}_{0}
+ b\mathcal{F}$ has a zero fixed locus.
\end{proof}

A direct application of the last proposition to an anticanonical
divisor $2C_n+(2+n)F$ on $\Bbb{F}_{n}$ gives the following.
\begin{corollary}
\label{FCAD}
The complete anticanonical linear system  of
$\Bbb{F}_{n}$ does not have a fixed component if $n$ takes the
values  zero, one or two. And, it has $C_n$ as the fixed component
for $n\geq 3$.
\end{corollary}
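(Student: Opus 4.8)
The plan is to apply Proposition \ref{FCGRSED} directly to the anticanonical class of $\Bbb{F}_{n}$. First I would recall that a canonical divisor on the Hirzebruch surface $\Bbb{F}_{n}$ has class $-2\mathcal{C}_{n}-(n+2)\mathcal{F}$ in $NS(\Bbb{F}_{n})$, so that an anticanonical divisor has class $2\mathcal{C}_{n}+(n+2)\mathcal{F}$; this is exactly the class $a\mathcal{C}_{n}+b\mathcal{F}$ with $a=2$ and $b=n+2$, and it is effective since $\Bbb{F}_{n}$ is a (geometrically ruled) rational surface with $K_{\Bbb{F}_{n}}^{2}=8\geq 0$, so Lemma \ref{anti} guarantees $|-K_{\Bbb{F}_{n}}|\neq\emptyset$.

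Next I would split into the cases handled by Proposition \ref{FCGRSED}. For $n=0$ the proposition says the system has no fixed component. For $n\geq 1$ we compare $b=n+2$ with $an=2n$: the inequality $b\geq an$ reads $n+2\geq 2n$, i.e. $n\leq 2$. Hence for $n\in\{1,2\}$ both $b\geq an$ and $n\geq 1$ hold, and Proposition \ref{FCGRSED} yields that $|{-K_{\Bbb{F}_{n}}}|$ has no fixed component. For $n\geq 3$ we have $b=n+2<2n=an$, so the proposition tells us there is exactly one fixed component, namely $j\mathcal{C}_{n}$, where $j$ is the unique integer with $1\leq j\leq a=2$ and $(a-j)n\leq b\leq (a-j+1)n-1$. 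I would then check that $j=1$ works: the condition becomes $n\leq n+2\leq 2n-1$, and $n+2\leq 2n-1$ is equivalent to $n\geq 3$, which holds; one should also note $j=2$ fails since it would force $0\leq n+2\leq n-1$, impossible. Therefore the fixed component is $\mathcal{C}_{n}$, as claimed.

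I do not expect any real obstacle here, since the corollary is a pure specialization of the preceding proposition; the only points requiring a sentence of care are (i) citing the correct formula for $K_{\Bbb{F}_{n}}$ — which follows from the adjunction/canonical-bundle formula on a geometrically ruled surface, or can be extracted from \cite[Chapter V]{HA} — and (ii) verifying that the integer $j$ produced by the proposition equals $1$ in the range $n\geq 3$, which is the short inequality computation above. Everything else is immediate.
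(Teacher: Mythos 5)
Your proposal is correct and follows essentially the same route as the paper: the corollary is obtained as a direct specialization of Proposition \ref{FCGRSED} to the anticanonical class $2\mathcal{C}_{n}+(n+2)\mathcal{F}$, with the inequality $n+2\geq 2n$ separating the cases $n\leq 2$ and $n\geq 3$ and the check that $j=1$ identifying $\mathcal{C}_{n}$ as the unique fixed component. The paper's own proof phrases the $n\geq 3$ case via the intersection number $\mathcal{C}_{n}.(2\mathcal{C}_{n}+(2+n)\mathcal{F})=2-n<0$ rather than solving for $j$, but this is the same argument in substance.
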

\begin{proof}
Taking into account that the complete linear system of the
anticanonical class of $\Bbb{F}_{0}$, $\Bbb{F}_{1}$ and
$\Bbb{F}_{2}$  respectively are $|2C_0+2F|$, $|2C_n+3F|$ and
$|2C_n+4F|$ respectively; and these complete linear systems
contains integral curves, the result holds in the case of
$\Bbb{F}_{n}$ with $0\leq n\leq 2$. Now, assume that $n\geq 3$.
From $C_n.(2C_n+(2+n)F)=2-n<0$, we deduce that $C_n$ is a fixed
component of the complete linear system $|2C_n+(2+n)F|$ of the
anticanonical class of $\Bbb{F}_{n}$. On the other hand, since the
complete linear system $|C_n+(2+n)F|$ contains a smooth curve, we
deduce that $C_n$ is the fixed component of $|2C_n+(2+n)F|$.
\end{proof}

\section{The Case of a blow up a Geometrically Ruled Surface}
Here, we consider the case when the Picard number $\rho(Y)$ of the
anticanonical rational surface  $Y$ is greater than or equal to
three. In the following theorem, we determine in particular the
fixed components of the anticanonical complete linear system of
$Y$ if it has some. If this system does not have any, then the
nature of $Y$ can be also determined.

\begin{theorem}
\label{FCAD2} Let $Y$ be an anticanonical rational surface with
Picard number $\rho(Y)\geq 3$. Two cases may occur:
\begin{itemize}
\item [(1-)] If the anticanonical complete linear system $|-K_{Y}|$ has a fixed
component, then it is either a $(-n)$-curve or an integral curve
of arithmetic genus equal to one and of self-intersection less
than or equal to zero. Moreover, the second case occurs with an
integral curve of self-intersection equal to zero only if
$K_{Y}^{2}=0$.
\item [(2-)] If the anticanonical complete linear system $|-K_{Y}|$ does
not have a fixed component, then  $K_{Y}^{2}\geq 0$ and   $Y$ is
isomorphic to  a blow up the projective plane at $r$ points, may
be infinitely near, $r$ is an integer less than or equal to nine.

\end{itemize}
\end{theorem}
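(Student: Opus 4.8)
The plan is to write $-K_Y$ as the sum of its fixed part $F$ and mobile part $M$, i.e. $\abs{-K_Y} = \abs{M} + F$ with $F = \sum_{i=1}^{p} a_i \Gamma_i$ for prime divisors $\Gamma_i$ and integers $a_i \geq 1$, and to analyze each $\Gamma_i$ via the adjunction formula and the structure theory of rational surfaces. First I would dispose of case (2-): if $\abs{-K_Y}$ has no fixed component, then $\abs{-K_Y}$ is mobile, so $-K_Y$ is nef (a general member of a mobile system meets every prime divisor nonnegatively, and in fact one should invoke Example \ref{nef3} after checking $(-K_Y)\cdot\Gamma\ge 0$ for each prime $\Gamma$; if $(-K_Y)\cdot\Gamma<0$ then $\Gamma$ would be a base component). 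From nefness one gets $K_Y^2 = (-K_Y)\cdot(-K_Y) \geq 0$, and then by Noether's formula $K_Y^2 + \rho(Y) = 10$ for a rational surface, so $\rho(Y) \leq 10$, i.e. $r := \rho(Y) - 1 \leq 9$; since $\rho(Y) \geq 3 > 1$, $Y$ is not $\Bbb{P}^2$, and since $K_Y^2 \geq 0$ while the Hirzebruch surfaces $\Bbb{F}_n$ with $n\ge 3$ have anticanonical systems with a fixed component (Corollary \ref{FCAD}), the classification of rational surfaces forces $Y$ to be a blow-up of $\Bbb{P}^2$ at $r \leq 9$ points, possibly infinitely near. (One must be slightly careful: a priori $Y$ could be a blow-up of some $\Bbb{F}_n$; but every blow-up of $\Bbb{F}_n$ at at least one point is also a blow-up of $\Bbb{P}^2$, and $\Bbb{F}_0,\Bbb{F}_1$ themselves have $\rho = 2$, so the hypothesis $\rho(Y)\ge 3$ and the no-fixed-component hypothesis—which rules out $\Bbb{F}_n$, $n\ge 3$, by Corollary \ref{FCAD}—leave exactly the asserted surfaces.)

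For case (1-), fix a prime fixed component $\Gamma = \Gamma_i$ of $\abs{-K_Y}$. The key inequality is $\Gamma^2 \leq 0$: since $\Gamma$ is a genuine fixed component, $\abs{\Gamma}$ cannot move, so $h^0(Y,\mathcal{O}_Y(\Gamma)) = 1$; by Lemma \ref{move}, if $\Gamma^2 > 0$ then $h^0(Y,\mathcal{O}_Y(\Gamma)) \geq 2$, a contradiction. Hence $\Gamma^2 \leq 0$. Now apply the adjunction formula $2p_a(\Gamma) - 2 = \Gamma^2 + K_Y\cdot\Gamma$, so $p_a(\Gamma) = 1 + \tfrac12(\Gamma^2 + K_Y\cdot\Gamma) \geq 0$, whence $\Gamma^2 + K_Y\cdot\Gamma \geq -2$. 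Writing $-K_Y \sim M + F$ with $\Gamma$ appearing in $F$ with multiplicity $a\ge 1$, we have $K_Y\cdot\Gamma = -M\cdot\Gamma - F\cdot\Gamma$; since $M$ is mobile, $M\cdot\Gamma \geq 0$, and writing $F\cdot\Gamma = a\Gamma^2 + (F - a\Gamma)\cdot\Gamma$ with $(F-a\Gamma)$ effective, one extracts bounds. The cleanest route: $p_a(\Gamma) \in \{0,1\}$. If $p_a(\Gamma) = 0$, then $\Gamma$ is a smooth rational curve with $\Gamma^2 = -n$ for some $n$; combined with $\Gamma^2 \leq 0$ we get $n \geq 0$, and $n = 0$ would make $\Gamma \cong \Bbb{P}^1$ with $\Gamma^2 = 0$ move in a pencil (again contradicting $h^0 = 1$), so $n \geq 1$ and $\Gamma$ is a $(-n)$-curve. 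If $p_a(\Gamma) = 1$, then $\Gamma^2 + K_Y\cdot\Gamma = 0$; with $\Gamma^2 \leq 0$ and $K_Y\cdot\Gamma = -M\cdot\Gamma - F\cdot\Gamma \leq 0$ would force $M\cdot\Gamma = F\cdot\Gamma = 0$ when $\Gamma^2 = 0$—wait, that needs care; the honest statement is just $\Gamma^2 \le 0$ and $p_a(\Gamma)=1$, which is exactly the claimed alternative.

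It remains to prove the refinement: if the fixed component $\Gamma$ has $p_a(\Gamma) = 1$ and $\Gamma^2 = 0$, then $K_Y^2 = 0$. Here adjunction with $p_a(\Gamma)=1$, $\Gamma^2=0$ gives $K_Y\cdot\Gamma = 0$. Write $-K_Y = M + F$ with $F = a\Gamma + F'$, $a\ge 1$, $F'$ effective not containing $\Gamma$, and $M$ mobile. Then $0 = K_Y\cdot\Gamma = -(M + a\Gamma + F')\cdot\Gamma = -M\cdot\Gamma - F'\cdot\Gamma$ (using $\Gamma^2=0$), and since $M\cdot\Gamma \geq 0$ (mobile) and $F'\cdot\Gamma \geq 0$ ($F'$ effective, no common component with $\Gamma$, so by Example \ref{nef3}-type reasoning each component meets $\Gamma$ nonnegatively—or directly, distinct primes meet nonnegatively), we conclude $M\cdot\Gamma = F'\cdot\Gamma = 0$ and $M\cdot\Gamma=0$. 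Now compute $K_Y^2 = (M+F)\cdot(-K_Y)$; intersecting $-K_Y = M + a\Gamma + F'$ with $\Gamma$ we have already used up, so instead intersect $-K_Y$ with itself: $K_Y^2 = (-K_Y)\cdot M + a(-K_Y)\cdot\Gamma + (-K_Y)\cdot F' = (-K_Y)\cdot M + 0 + (-K_Y)\cdot F'$. One then wants to show these remaining terms vanish; the mechanism is that $Y$ being anticanonical with a fixed anticanonical curve of arithmetic genus one and self-intersection zero is exactly the "half of the classification" where $K_Y^2 = 0$—in the literature this is the case where $-K_Y$ itself is (up to the fixed multiplicity) an elliptic or nodal/cuspidal fiber, and $h^0(-K_Y) = 1$. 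I expect the rigorous argument to run: since $\Gamma^2 = 0$, $\Gamma$ is nef; since $p_a(\Gamma) = 1$ and $h^0(\mathcal O_Y(\Gamma)) = 1$, the Hodge Index Theorem applied to $\Gamma$ and $-K_Y$ (using $K_Y\cdot\Gamma = 0$ and $\Gamma^2 = 0$) forces $-K_Y$ to be numerically proportional to $\Gamma$ on the orthogonal complement, ultimately yielding $K_Y^2 \leq 0$; combined with $K_Y^2 \geq 0$ whenever $-K_Y$ is nef—but $-K_Y$ need not be nef here—one instead argues directly that $K_Y^2 = (-K_Y)\cdot(M+F)$ and each summand is $\le 0$ by Hodge Index against the nef class $\Gamma$, forcing $K_Y^2 = 0$. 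I expect this last refinement—pinning down $K_Y^2 = 0$ rather than merely $K_Y^2 \leq 0$—to be the main obstacle, and the decisive tool for it to be the Hodge Index Theorem applied to the pair $\{\Gamma, -K_Y\}$ together with the fact $h^0(Y,\mathcal{O}_Y(\Gamma)) = 1$.
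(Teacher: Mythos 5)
Your overall strategy (decompose $-K_Y$ into mobile and fixed parts, get $\Gamma^2\le 0$ from Lemma \ref{move}, and finish with adjunction) is genuinely different from the paper's, which instead pushes $\Gamma$ forward along a birational morphism to a minimal model ($\Bbb{P}^2$ or $\Bbb{F}_n$, $n\ge 3$) and reads off the possibilities from the degree of the image. The difference matters, because your route has a real gap at its central step: you assert $p_a(\Gamma)\in\{0,1\}$ but never prove it, and the bounds you set up do not deliver it. Writing $-K_Y\sim M+a\Gamma+F'$ with $a\ge 1$ and $F'$ effective without $\Gamma$ as a component, adjunction gives
$2p_a(\Gamma)-2=\Gamma^2+K_Y\cdot\Gamma=(1-a)\Gamma^2-M\cdot\Gamma-F'\cdot\Gamma$.
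When $a=1$ this is $\le 0$ and you are done; but when $a\ge 2$ the term $(1-a)\Gamma^2$ is \emph{nonnegative} (since $\Gamma^2\le 0$), so nothing prevents $p_a(\Gamma)\ge 2$ numerically (e.g.\ $a=2$, $\Gamma^2=-2$, $M\cdot\Gamma=F'\cdot\Gamma=0$ gives $p_a(\Gamma)=2$). The fact that this cannot happen is exactly what the paper's geometric argument supplies: the image of $\Gamma$ in $\Bbb{P}^2$ is either a point (so $\Gamma$ is a smooth rational curve of negative self-intersection) or a curve of degree $d$ with $ad\le 3$, forcing $p_a\le 1$, and similarly on $\Bbb{F}_n$ via Proposition \ref{FCGRSED}. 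You need some such input; pure intersection arithmetic on $Y$ does not close this case.

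Two further points. First, the refinement ``$p_a(\Gamma)=1$, $\Gamma^2=0$ implies $K_Y^2=0$'' is left explicitly unfinished (``I expect the rigorous argument to run\dots''); the Hodge Index Theorem applied to the nef class $\Gamma$ with $\Gamma^2=K_Y\cdot\Gamma=0$ only yields $K_Y^2\le 0$, with equality exactly when $-K_Y$ is numerically proportional to $\Gamma$, and you do not establish that proportionality. Second, in case (2-) your parenthetical ``every blow-up of $\Bbb{F}_n$ at at least one point is also a blow-up of $\Bbb{P}^2$'' is false for $n\ge 2$ (a blow-up of $\Bbb{F}_4$ at one point has only $\Bbb{F}_3$, $\Bbb{F}_4$, $\Bbb{F}_5$ as minimal models); the correct way to exclude $\Bbb{F}_n$, $n\ge 3$, as a minimal model of $Y$ is to note that the strict transform $\widetilde{C}_n$ satisfies $\widetilde{C}_n\cdot(-K_Y)\le 2-n<0$ and would therefore be a fixed component, contradicting the hypothesis of (2-). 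The rest of (2-) (nefness of $-K_Y$, Noether's formula giving $r\le 9$) is fine and matches the paper's one-line treatment.
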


\begin{proof}
Since a blow up of $\Bbb{F}_{0}$ or of $\Bbb{F}_{1}$ at a nonempty
set of points (may be infinitely near) has  the projective plane
$\Bbb{P}^{2}$ as a minimal model, and  since a blow up of
$\Bbb{F}_{2}$ at a nonempty set of points (may be infinitely near)
has $\Bbb{P}^{2}$ or $\Bbb{F}_{3}$ as a minimal model, we may
assume that the surface $Y$ has  either $\Bbb{P}^{2}$ or
$\Bbb{F}_{n}$, with $n\geq 3$, as a minimal model.\\
Let us prove the item $(1-)$. Assume first that $\Bbb{P}^{2}$ is a
minimal model of $Y$ and let $\phi $ be a projective birational
morphism from $Y$ to $\Bbb{P}^{2}$. Let $\Gamma $ be a fixed
irreducible component of the complete linear system $|-K_{Y}|$.
Two possibilities may occur: $\phi (\Gamma )$ is either a point of
$\Bbb{P}^{2}$ or an integral  curve on $\Bbb{P}^{2}$.\\
Assume that $\phi (\Gamma )$ is a point, then by \cite[Exercise 5.4.
(a), page 419]{HA}, we deduce that $\Gamma $ is a smooth rational
curve of self-intersection strictly negative, i.e. a $(-n)$-curve on
$Y$ for some integer $n\geq 1$. Now  assume that $\phi (\Gamma )$ is
an irreducible curve on $\Bbb{P}^{2}$, let denote by $d$ its degree.
Since $\phi ( -K_{Y}) $ has degree equal to three. It follows that
$1\leq d \leq 3$. If $d=3$, then we have $\Gamma +\sum_{i=1}^{i=u}
n_{i}E_{i}= -K_{Y}$ for some integers $n_{i}\geq 0$ and some smooth
rational curves $E_{i}$ of self-intersection strictly negative,
where $u\geq 1$ is an integer. On the other hand, it follows from
the fact that $\Gamma $ is a fixed irreducible component of
$|-K_{Y}|$ that ${\Gamma ^{2}\leq 0} $. Otherwise, we would get that
$\Gamma ^{2} >0$, in particular $\Gamma $ (see Lemma \ref{move})
moves which is a contradiction
with the fact that $\Gamma $ does not move.\\
If $d=2$, then $\phi (\Gamma )$ is an irreducible conic on
$\Bbb{P}^{2}$. Hence, it is a smooth rational curve. It follows
from \cite[Corollary 5.4., page 411]{HA} that $\Gamma $ is also a
smooth rational curve on $Y$. And  $\Gamma $ should be of
self-intersection strictly negative. The same argument prove that
if $d=1$, then $\Gamma $ is a smooth rational curve of
self-intersection strictly negative.\\
Now let $n\geq 3$ be a fixed integer, assume that $\Bbb{F}_{n}$ is
a minimal model of $Y$. Then consider $\psi $ be a projective
birational morphism from $Y$ to  $\Bbb{F}_{n}$. Let $\Gamma $ be a
fixed irreducible component of $|-K_{Y}|$, then we can assume that
$\psi (\Gamma)$ is an irreducible curve on $\Bbb{F}_{n}$.
Otherwise, it should be a point of $\Bbb{F}_{n}$; so by proceeding
as in the above case for  $\Bbb{P}^{2}$, we get the result.\\
Thus  assuming that $\psi (\Gamma)$ is an irreducible curve, in
particular, it  is an irreducible component of $-K_{\Bbb{F}_{n}}=
2C_{n}+ (2+n)F$. Thus taking into account of the results of
Proposition \ref{FCGRSED} , $\Gamma $ may be one of the following
irreducible curves: $C_{n}$, $F$, $C_{n}+ nF$,
$C_{n}+ (1+n)F$, and $C_{n}+(2+n)F$. So the result follows.\\

The item $(2-)$ follows at once by remarking that an anticanonical
divisor  $-K_{Y}$ of $Y$ is numerically effective.

\end{proof}

In particular, the following result holds:

\begin{corollary}
\label{FCAD3}
Let $X$ be a smooth rational surface such that
$K_{X}^{2}\geq 0$, where $K_{X}$ denotes a canonical divisor on
$X$. Assume that the anticanonical complete linear system has a
fixed component $\Gamma $. Two cases may occur:
\begin{itemize}
\item If $K_{X}^{2}> 0$, then $\Gamma $ is a $(-n)$-curve, where
$n\geq 1$ is an integer;
\item If $K_{X}^{2}= 0$, then  $\Gamma $ is either an integral curve
of arithmetic genus equal to one  and of self-intersection equal
to zero, or a smooth rational curve of strictly negative
self-intersection.
\end{itemize}
\end{corollary}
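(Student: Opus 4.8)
The plan is to derive Corollary \ref{FCAD3} directly from Theorem \ref{FCAD2} by splitting into the cases $\rho(X) = 1$, $\rho(X) = 2$, and $\rho(X) \geq 3$, and in each case using the constraint $K_X^2 \geq 0$ to control the arithmetic genus one possibility. First I would dispose of $\rho(X) = 1$: then $X \cong \mathbb{P}^2$ and $|-K_X| = |\mathcal{O}_{\mathbb{P}^2}(3)|$ has no fixed component, so the hypothesis is vacuous. Next, for $\rho(X) = 2$, we have $X \cong \mathbb{F}_n$ for some $n \geq 0$, and here $K_X^2 = 8 > 0$; by Corollary \ref{FCAD} the only case with a fixed component is $n \geq 3$, where the fixed component is $C_n$, a smooth rational curve with $C_n^2 = -n < 0$, hence a $(-n)$-curve. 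This settles the sub-case $K_X^2 > 0$ when $\rho(X) \leq 2$, and shows the $K_X^2 = 0$ statement is vacuous for $\rho(X) \leq 2$ since $K_X^2 \in \{9, 8\}$ there.

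For $\rho(X) \geq 3$, I would invoke Theorem \ref{FCAD2}(1-) directly: the fixed component $\Gamma$ is either a $(-n)$-curve ($n \geq 1$) or an integral curve of arithmetic genus one with $\Gamma^2 \leq 0$, the latter forcing $\Gamma^2 = 0$ only when $K_X^2 = 0$. So if $K_X^2 > 0$, the arithmetic-genus-one option with $\Gamma^2 = 0$ is excluded by the last clause of the theorem; it remains to rule out an arithmetic genus one curve with $\Gamma^2 < 0$ when $K_X^2 > 0$. Here I would use the adjunction formula: for an integral curve $\Gamma$ of arithmetic genus $p_a(\Gamma) = 1$ one has $\Gamma^2 + K_X\cdot\Gamma = 2p_a(\Gamma) - 2 = 0$, so $\Gamma^2 = -K_X\cdot\Gamma$. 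The key point is that $\Gamma$ being a fixed prime component of $|-K_X|$ means $-K_X - \Gamma$ is effective, i.e. $-K_X = \Gamma + E$ with $E$ effective; intersecting with the nef divisor $-K_X$ (nef because $K_X^2 \geq 0$ combined with the structure of $X$ — actually more carefully, $-K_X$ effective plus $K_X^2 \geq 0$ does not immediately give nef, so I would instead argue via $-K_X\cdot\Gamma = \Gamma^2 + E\cdot\Gamma$ and analyze the sign) yields $\Gamma^2 \geq 0$ unless $\Gamma$ meets $E$ negatively. The cleanest route: $K_X^2 = (-K_X)^2 = (\Gamma + E)^2 = \Gamma^2 + 2\Gamma\cdot E + E^2$, and combining with $\Gamma^2 = -K_X\cdot\Gamma = \Gamma^2 + \Gamma\cdot E$ (so $\Gamma\cdot E = 0$) gives $K_X^2 = \Gamma^2 + E^2$; if additionally $E^2 \leq \Gamma^2$-type bounds hold one concludes, but this needs care.

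The main obstacle I anticipate is exactly this last step: showing that when $K_X^2 > 0$ a genus-one fixed component cannot have strictly negative self-intersection. The honest approach is to observe that since $\Gamma$ is a fixed component, $\Gamma \cdot E = 0$ cannot generally be assumed; rather, from $-K_X = \Gamma + E$ and the Hodge Index Theorem applied on the rank-$\rho(X)$ lattice $NS(X)$, together with $(-K_X)^2 = K_X^2 > 0$, one gets that $-K_X$ lies in the positive cone, so any effective divisor class $D$ with $D \not\equiv 0$ and $D \leq -K_X$ satisfies $(-K_X)\cdot D > 0$ unless $D$ is numerically trivial. Applying this with $D = \Gamma$ gives $(-K_X)\cdot\Gamma > 0$, hence by adjunction $\Gamma^2 = -K_X\cdot\Gamma > 0$ — but this contradicts Lemma \ref{move}, which would force $\Gamma$ to move. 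Therefore no arithmetic-genus-one fixed component exists at all when $K_X^2 > 0$, and the only remaining possibility from Theorem \ref{FCAD2} is a $(-n)$-curve. For $K_X^2 = 0$ with $\rho(X) \geq 3$, the statement is simply a restatement of Theorem \ref{FCAD2}(1-), since arithmetic genus one with $\Gamma^2 \leq 0$ and the vanishing of $K_X^2$ permits $\Gamma^2 = 0$, and a $(-n)$-curve is a smooth rational curve of strictly negative self-intersection; I would note that a genus-one curve with $\Gamma^2 < 0$ is still allowed by the theorem but can be folded into "arithmetic genus one, self-intersection $\le 0$" — so I should double-check whether the corollary's wording ("self-intersection equal to zero") is meant to be exhaustive, and if so supply the extra argument that $\Gamma^2 < 0$ is impossible when $K_X^2 = 0$ via the same Hodge-index reasoning applied to $-K_X$ (now on the boundary of the cone), which I expect to be the second, milder obstacle.
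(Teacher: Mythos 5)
Your reduction to Theorem \ref{FCAD2} and the disposal of the cases $\rho(X)\leq 2$ via Corollary \ref{FCAD} are fine, and you have correctly located the real issue: Theorem \ref{FCAD2} as stated does not by itself exclude, when $K_X^2>0$, a fixed component of arithmetic genus one with $\Gamma ^2<0$ (the paper presents the corollary as an immediate ``in particular,'' so this is genuinely the point that needs an argument). Unfortunately the argument you supply for it rests on a false statement. The Hodge Index Theorem does \emph{not} imply that a class $H$ with $H^2>0$ pairs positively (or even non-negatively) with every nonzero effective class $D$ satisfying $D\leq H$; it only says that $H\cdot D=0$ forces $D^2\leq 0$. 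Concretely, let $X$ be the blow-up of $\Bbb{P}^{2}$ at four collinear points, so $K_X^2=5>0$, and let $\ell =L-E_1-E_2-E_3-E_4$ be the strict transform of the line through them. Then $\ell $ is an integral curve, $-K_X-\ell =2L$ is effective, $\ell \not\equiv 0$, and yet $(-K_X)\cdot \ell =-1<0$. So your deduction $(-K_X)\cdot \Gamma >0$, and with it the contradiction via Lemma \ref{move}, does not follow; the same objection applies to the ``Hodge-index reasoning on the boundary of the cone'' that you propose for the case $K_X^2=0$. Note also that your preliminary computation already gives $\Gamma \cdot E=0$ and $K_X^2=\Gamma ^2+E^2$ with $E$ effective, and this is perfectly consistent with $\Gamma ^2<0$ and $E^2>0$, so no purely lattice-theoretic argument of this kind can close the gap.

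The step can be closed, but only by returning to the geometry used in the proof of Theorem \ref{FCAD2}. There, the arithmetic-genus-one case arises only when a minimal model of $X$ is $\Bbb{P}^{2}$ and $\phi (\Gamma )$ is a cubic; the condition $p_a(\Gamma )=1$ forces all multiplicities of that cubic at the blown-up points to be $0$ or $1$, so $\Gamma =3L-\sum_{i\in S}E_i$ for some subset $S$ and $-K_X-\Gamma =-\sum_{i\notin S}E_i$. The latter class is effective only if $S$ is the whole set of blown-up points, i.e.\ only if $\Gamma \sim -K_X$, whence $\Gamma ^2=K_X^2$. If $K_X^2>0$ this contradicts Lemma \ref{move} (a fixed component cannot have positive self-intersection), which eliminates the genus-one alternative and proves the first bullet; if $K_X^2=0$ it forces $\Gamma ^2=0$, which is exactly the second bullet. (When the minimal model is $\Bbb{F}_{n}$ with $n\geq 3$, every candidate image of $\Gamma $ listed in the proof of the theorem is a rational curve, so no genus-one component occurs in that branch.) In short: your case structure and conclusion are right, and you correctly flagged the missing step, but the Hodge-index argument you offer for it is invalid and must be replaced by this explicit determination of the class of $\Gamma $.
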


Another useful result, see for instance \cite{L0} and \cite{L3},
is:

\begin{corollary}
\label{FCAD4} Let $X$ be a smooth rational surface such that
$K_{X}^{2}\geq 0$, where $K_{X}$ denotes a canonical divisor on
$X$. If $-K_{X}$ is not numerically effective, then the
anticanonical complete linear system has a $(-n)$-curve, $n$ being
an integer greater than or equal to three, as a fixed component.
\end{corollary}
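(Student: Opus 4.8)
The plan is to reduce Corollary \ref{FCAD4} to the already-established Theorem \ref{FCAD2} and Corollary \ref{FCAD3}. First I would observe that since $K_{X}^{2}\geq 0$, Lemma \ref{anti} tells us $X$ is an anticanonical rational surface, so $|-K_{X}|$ is nonempty and the results of the previous section apply. The hypothesis that $-K_{X}$ is not numerically effective means, by the very definition of nef-ness recalled before Example \ref{nef1}, that there is a prime divisor $\Gamma$ on $X$ with $(-K_{X})\cdot\Gamma<0$, i.e. $K_{X}\cdot\Gamma>0$. The next step is to argue that such a $\Gamma$ must be a fixed component of $|-K_{X}|$: writing $-K_{X}\sim\sum a_{i}\Gamma_{i}$ with the $\Gamma_{i}$ distinct prime divisors and $a_{i}>0$, the intersection $\Gamma\cdot(-K_{X})=\sum a_{i}(\Gamma\cdot\Gamma_{i})<0$ forces $\Gamma$ to coincide with one of the $\Gamma_{i}$ with $\Gamma_{i}^{2}=\Gamma^{2}<0$ and $a_{i}\geq 1$; in fact every member of $|-K_{X}|$ must contain $\Gamma$, since otherwise intersecting that member with $\Gamma$ would give a nonnegative number, contradicting $\Gamma\cdot(-K_{X})<0$. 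Hence $|-K_{X}|$ has a fixed component, and moreover $\Gamma$ itself is one with $K_{X}\cdot\Gamma>0$.

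Having produced a fixed component, I would then split according to the Picard number. If $\rho(X)=1$ then $X\cong\mathbb{P}^{2}$ and $-K_{X}$ is very ample, hence nef, contradicting the hypothesis, so this case is vacuous. If $\rho(X)=2$ then $X\cong\mathbb{F}_{n}$ for some $n\geq 0$, and Corollary \ref{FCAD} identifies the fixed component: for $n\leq 2$ there is none (again contradicting the hypothesis), while for $n\geq 3$ the fixed component is $C_{n}$, which is a $(-n)$-curve with $n\geq 3$, so the conclusion holds. If $\rho(X)\geq 3$ we are in the setting of Theorem \ref{FCAD2}, item (1-): the fixed component $\Gamma$ is either a $(-m)$-curve for some $m\geq 1$ or an integral curve of arithmetic genus one with $\Gamma^{2}\leq 0$. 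I would rule out the genus-one alternative for our particular $\Gamma$: if $p_{a}(\Gamma)=1$, the adjunction formula gives $K_{X}\cdot\Gamma=-\Gamma^{2}\leq 0$, contradicting $K_{X}\cdot\Gamma>0$. So $\Gamma$ is a $(-m)$-curve; then adjunction for a smooth rational curve gives $K_{X}\cdot\Gamma=m-2$, and $K_{X}\cdot\Gamma>0$ forces $m\geq 3$. Thus $\Gamma$ is a $(-m)$-curve with $m\geq 3$, which is exactly the asserted conclusion.

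The only mild subtlety — what I'd flag as the "main obstacle," though it is more a bookkeeping point than a genuine difficulty — is making sure that the prime divisor $\Gamma$ witnessing non-nef-ness is literally the fixed component whose self-intersection we control, rather than merely asserting abstractly that "a fixed component exists." The clean way is the argument above: the inequality $\Gamma\cdot(-K_{X})<0$ simultaneously shows $\Gamma$ is forced into every anticanonical divisor (so it is a fixed component) and, via adjunction applied to $\Gamma$ itself, pins down $\Gamma^{2}<0$ and then $m\geq 3$. One should also double-check that an irreducible curve $\Gamma$ with $\Gamma\cdot(-K_{X})<0$ cannot have $p_{a}(\Gamma)\geq 2$: but $\Gamma$ is a component of the anticanonical divisor on a rational surface, and in all three Picard-number regimes Theorem \ref{FCAD2} and Corollary \ref{FCAD} already restrict fixed components to arithmetic genus $\leq 1$, so no new work is needed. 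Assembling these observations yields the corollary.
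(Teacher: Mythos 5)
Your overall strategy is the natural one (the paper states Corollary \ref{FCAD4} without proof): produce a prime divisor $\Gamma$ with $\Gamma\cdot(-K_X)<0$ from the failure of nefness, show that $\Gamma$ is then forced into every member of $|-K_X|$ and satisfies $\Gamma^2<0$ and $K_X\cdot\Gamma>0$, split on the Picard number, and finish with adjunction. The reduction to a fixed component, the cases $\rho(X)=1$ and $\rho(X)=2$ (via Corollary \ref{FCAD}), and the final step $K_X\cdot\Gamma=m-2>0\Rightarrow m\geq 3$ are all correct.

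There is, however, a genuine error in the step where you dispose of the arithmetic-genus-one alternative when $\rho(X)\geq 3$. You claim that $p_a(\Gamma)=1$ gives $K_X\cdot\Gamma=-\Gamma^2\leq 0$, contradicting $K_X\cdot\Gamma>0$. Adjunction does give $K_X\cdot\Gamma=-\Gamma^2$, but you have already established $\Gamma^2<0$, so this reads $K_X\cdot\Gamma=-\Gamma^2>0$ --- perfectly consistent with $K_X\cdot\Gamma>0$, not a contradiction; the inequality $-\Gamma^2\leq 0$ would require $\Gamma^2\geq 0$, which is the opposite of what you know. Your computation therefore only excludes the sub-case $\Gamma^2=0$, and the possibility that $\Gamma$ is an integral curve of arithmetic genus one with $\Gamma^2<0$ remains open; Theorem \ref{FCAD2}(1-) alone does not exclude it, since its ``moreover'' clause speaks only of the self-intersection-zero sub-case. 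To close the gap you should do what your opening sentence announces and actually invoke Corollary \ref{FCAD3}: for $K_X^2>0$ it asserts outright that the fixed component is a $(-n)$-curve, and for $K_X^2=0$ it restricts the genus-one alternative to $\Gamma^2=0$, where adjunction genuinely yields $K_X\cdot\Gamma=0$ and contradicts $K_X\cdot\Gamma>0$. (Whether Corollary \ref{FCAD3} itself is fully justified by Theorem \ref{FCAD2} is a separate question about the paper, but you are entitled to cite it as stated.)
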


\section*{Acknowledgments} Research partially supported in 2011 by
C.I.C. - U.M.S.N.H. (Mexico).

\end{document}